\theoremstyle{plain}
\newtheorem{thm}{\protect\theoremname}[section]
  \theoremstyle{plain}
  \newtheorem{fact}[thm]{\protect\factname}
  \theoremstyle{definition}
  \newtheorem{defn}[thm]{\protect\definitionname}
  \theoremstyle{plain}
  \theoremstyle{plain}
  \newtheorem{prop}[thm]{\protect\propositionname}
  \theoremstyle{plain}
  \newtheorem{ex}[thm]{\protect\exname}
  \theoremstyle{plain}
  \newtheorem{cor}[thm]{\protect\corname}
  \theoremstyle{plain}
  \theoremstyle{plain}
  \newtheorem{quest}[thm]{\protect\questname}
  \theoremstyle{flaweddefinition}
  \providecommand{\definitionname}{Definition}
  \providecommand{\factname}{Fact}
  \providecommand{\lemmaname}{Lemma}
  \providecommand{\propositionname}{Proposition}
\providecommand{\theoremname}{Theorem}
\providecommand{\exname}{Example}
\providecommand{\corname}{Corollary}
\providecommand{\notaname}{Notation}
\providecommand{\questname}{Question}
\providecommand{\flaweddefinitionname}{Flawed Definition}
\newcommand{\e}{\varepsilon}
\providecommand{\dotdiv}{
  \mathbin{
    \vphantom{+}
    \mathrm{
      \mathsurround=0pt 
      \ooalign{
        \noalign{\kern-.35ex}
        \hidewidth$\smash{\cdot}$\hidewidth\cr 
        \noalign{\kern.35ex}
        $-$\cr 
      }%
    }%
  }%
}
\newcommand{\vertiii}[1]{{\left\vert\kern-0.25ex\left\vert\kern-0.25ex\left\vert #1 
    \right\vert\kern-0.25ex\right\vert\kern-0.25ex\right\vert}}
\begin{document}


\title{Indiscernible Subspaces and Minimal Wide Types}

\author{James Hanson}
\address{Department of Mathematics, University of Wisconsin--Madison, 480 Lincoln Dr., Madison WI 53706}

\date{\today}

\begin{abstract}
We develop the machinery of \emph{indiscernible subspaces} in continuous theories of expansions of Banach spaces, showing that any such theory has an indiscernible subspace and therefore an indiscernible set. We extend a result of Shelah and Usvyatsov \cite{SHELAH2019106738} by showing that a sequence of realizations of a (possibly unstable) minimal wide type $p$ is a Morley sequence in $p$ if and only if it is the orthonormal basis of an indiscernible subspace in $p$. We also give an example showing that minimal wide types do not generally have type-definable indiscernible subspaces (answering a question of Shelah and Usvyatsov \cite{SHELAH2019106738}), as well as an example showing that our result fails for non-minimal wide types, even in $\omega$-stable theories. 
\end{abstract}

\maketitle






\section{Introduction} 

Continuous first-order logic is a generalization of first-order logic suitable for studying metric structures, which are mathematical structures with an underlying complete metric and with uniformly continuous functions and $\mathbb{R}$-valued predicates. A rich class of metric structures arise from Banach spaces and expansions thereof. An active area of research in continuous logic is the characterization of inseparably categorical continuous theories. 
For a general introduction to continuous logic, see \cite{MTFMS}.

In the present work we will consider expansions of Banach spaces.  We introduce the notion of an indiscernible subspace. An indiscernible subspace is a subspace in which types of tuples of elements only depend on their quantifier-free type in the reduct consisting of only the metric and the constant $\mathbf{0}$. Similarly to indiscernible sequences, indiscernible subspaces are always consistent with a Banach theory (with no stability assumption, see Theorem \ref{thm:exist}), but are not always present in every model. We will show that an indiscernible subspace always takes the form of an isometrically embedded real Hilbert space wherein the type of any tuple only depends on its quantifier-free type in the Hilbert space. The notion of an indiscernible subspace is of independent interest in the model theory of Banach and Hilbert structures, and in particular here we use it to improve the results of Shelah and Usvyatsov in the context of types in the full language (as opposed to $\Delta$-types). Specifically, in this context we give a shorter proof of Shelah and Usvyatsov's main result \cite[Prop.\ 4.13]{SHELAH2019106738}, we improve their result on the strong uniqueness of Morley sequences in minimal wide types \cite[Prop.\ 4.12]{SHELAH2019106738}, and we expand on their commentary on the ``induced structure'' of the span of a Morley sequence in a minimal wide type \cite[Rem.\ 5.6]{SHELAH2019106738}. This more restricted case is what is relevant to inseparably categorical Banach theories, so our work is applicable to the problem of their characterization. 

Finally, we present some relevant counterexamples and in particular we resolve (in the negative) the question of Shelah and Usvyatsov presented at the end of Section 5 of \cite{SHELAH2019106738}, in which they ask whether or not the span of a Morley sequence in a minimal wide type is always a type-definable set.

\subsection{Background}

For $K \in \{\mathbb{R}, \mathbb{C}\}$, we think of a $K$-Banach space $X$ as being a metric structure $\mathfrak{X}$ whose underlying set is the closed unit ball $B(X)$ of $X$ with metric $d(x,y) = \left\lVert x - y \right\rVert$.\footnote{For another equivalent approach, see \cite{MTFMS}, which encodes Banach structures as many-sorted metric structures with balls of various radii as different sorts.}  This structure is taken to have for each tuple $\bar{a} \in K$  an $|\bar{a}|$-ary predicate $s_{\bar{a}}(\bar{x}) = \left\lVert \sum_{i<|\bar{a}|} a_i x_i \right\rVert$, although we will always write this in the more standard form. 
 Note that we evaluate this in $X$ even if $\sum_{i<|\bar{a}|} a_i x_i$ is not actually an element of the structure $\mathfrak{X}$.  For convenience, we will also have a constant for the zero vector, $\mathbf{0}$, and an $n$-ary function $\sigma_{\bar{a}}(\bar{x})$ such that $\sigma_{\bar{a}}(\bar{x}) = \sum_{i<|\bar{a}|} a_i x_i$ if it is in $B(X)$ and  $\sigma_{\bar{a}}(\bar{x}) = \frac{\sum_{i<|\bar{a}|} a_i x_i}{\left\lVert \sum_{i<|\bar{a}|} a_i x_i \right\rVert}$ otherwise. If $|a|\leq 1$, we will write $ax$ for $\sigma_{a}(x)$. Note that while this is an uncountable language, it is interdefinable with a countable reduct of it (restricting attention to rational elements of $K$). These structures capture the typical meaning of the ultraproduct of Banach spaces.  As is common, we will conflate $X$ and the metric structure $\mathfrak{X}$ in which we have encoded $X$.

\begin{defn}
A \emph{Banach (or Hilbert) structure} is a metric structure which is the expansion of a Banach (or Hilbert) space. A \emph{Banach (or Hilbert) theory} is the theory of such a structure. The adjectives \emph{real} and \emph{complex} refer to the scalar field $K$.
\end{defn}

$C^{\ast}$- and other Banach algebras are commonly studied examples of Banach structures that are not just Banach spaces.

A central problem in continuous logic is the characterization of inseparably categorical countable theories, that is to say countable theories with a unique model in each uncountable density character. The analog of Morley's theorem was shown in continuous logic via related formalisms \cite{ben-yaacov_2005, Shelah2011}, but no satisfactory analog of the Baldwin-Lachlan theorem or its precise structural characterization of uncountably categorical discrete theories in terms of strongly minimal sets is known. Some progress in the specific case of Banach theories has been made in \cite{SHELAH2019106738}, in which Shelah and Usvyatsov introduce the notion of a wide type and the notion of a minimal wide type, which they argue is the correct analog of strongly minimal types in the context of inseparably categorical Banach theories.

\begin{defn}
A type $p$ in a Banach theory is \emph{wide} if its set of realizations consistently contain the unit sphere of an infinite dimensional real subspace.

A type is \emph{minimal wide} if it is wide and has a unique wide extension to every set of parameters.
\end{defn}
In \cite{SHELAH2019106738}, Shelah and Usvyatsov were able to show that every Banach theory has wide complete types using the following classical concentration of measure results of Dvoretzky and Milman, which Shelah and Usvyatsov refer to as the Dvoretzky-Milman theorem.

\begin{fact}[Dvoretzky-Milman theorem] \label{fact:DM-thm}
Let $(X,\left\lVert \cdot \right\rVert)$ be an infinite dimensional real Banach space with unit sphere $S$ and let $f:S \rightarrow \mathbb{R}$ be a uniformly continuous function. For any $k<\omega$ and $\e > 0$, there exists a $k$-dimensional subspace $Y \subset X$ and a Euclidean norm\footnote{A norm $\vertiii{\cdot}$  is \emph{Euclidean} if it satisfies the parallelogram law, $2\vertiii{a}^2 + 2 \vertiii{b}^2 = \vertiii{a+b}^2 + \vertiii{a-b}^2,$ or equivalently if it is induced by an inner product.} $\vertiii{\cdot}$ on $Y$ such that for any $a,b \in S\cap Y$, we have $\vertiii{a} \leq \left\lVert a\right\rVert \leq (1 + \e)\vertiii{a}$ and $|f(a) - f(b)| < \e$.\footnote{Fact \ref{fact:DM-thm} without $f$ is (a form of) Dvoretsky's theorem.} 
\end{fact}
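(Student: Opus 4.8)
The plan is to run Milman's concentration-of-measure proof of Dvoretzky's theorem while tracking the function $f$ simultaneously. Because $X$ is infinite dimensional, I am free to choose a finite-dimensional subspace $E \subseteq X$ of dimension $n$ as large as I like at the end of the argument. On $E$ I would fix a Euclidean norm $|\cdot|$ in a suitable position relative to $\|\cdot\|$ (say the position given by John's ellipsoid), normalized so that the mean $M = \int_{S_E}\|x\|\,d\sigma(x) = 1$, where $S_E = \{x \in E : |x| = 1\}$ carries the rotation-invariant probability measure $\sigma$; write $\beta = \sup_{x \in S_E}\|x\|$. The only feature I need is that the \emph{Dvoretzky dimension} $n(M/\beta)^2 = n/\beta^2$ tends to infinity with $n$ (in the worst case only like $\log n$, but that suffices), since it will control every concentration estimate below.

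The first ingredient is concentration of the norm: as $x \mapsto \|x\|$ is $\beta$-Lipschitz for $|\cdot|$, Lévy's isoperimetric inequality on $S_E$ gives
\[
\sigma\bigl(\{\, x \in S_E : \lvert \|x\| - 1 \rvert > t \,\}\bigr) \le C\exp\bigl(-c\,n t^2/\beta^2\bigr)
\]
for absolute constants $C,c$. The second ingredient is concentration of $f$, which is only uniformly continuous; here I would argue from a median $m$ of $f$ on $S_E$ rather than a Lipschitz constant. Choosing $\eta>0$ small enough that $|a-b|\le\eta$ forces $|f(a)-f(b)|<\e/4$ (possible since $\|a-b\|\le\beta|a-b|$ and $f$ is uniformly continuous), the sets $\{f\le m\}$ and $\{f\ge m\}$ each have measure at least $\tfrac12$, so by isoperimetry their $\eta$-neighbourhoods have measure at least $1 - C\exp(-c\,n\eta^2)$, and on the intersection $f$ lies within $\e/4$ of $m$. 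Since $\eta \sim 1/\beta$, the exponent $n\eta^2$ is again proportional to $n/\beta^2$, hence diverges. Fixing $t = \e/8$, the ``bad set'' on which $\|x\|$ deviates from $1$ by more than $\tfrac{\e}{8}$ or $f$ deviates from $m$ by more than $\tfrac{\e}{4}$ therefore has $\sigma$-measure at most $C\exp(-c'n)$ with $c' > 0$ depending only on $\e$ and $f$.

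To produce $Y$, I would take a $\delta$-net $N$ of the Euclidean unit sphere of a fixed reference $k$-dimensional subspace $Y_0$, of size at most $(C/\delta)^k$, and apply a Haar-random rotation $U \in O(n)$. Each point $Ux$ with $x \in N$ is uniformly distributed on $S_E$, so it meets the bad set with probability at most $C\exp(-c'n)$; a union bound shows that with positive probability $UN$ avoids the bad set entirely, provided $(C/\delta)^k\,C\exp(-c'n) < 1$. This inequality holds once $n$ is large enough, which is permissible precisely because $X$ is infinite dimensional. Setting $Y = UY_0$ and upgrading the estimates from the net $UN$ to all of $S_E \cap Y$ by a routine approximation argument (using $\beta$-Lipschitz continuity for the norm and uniform continuity for $f$), the Euclidean norm $\vertiii{x} := (1-\tfrac{\e}{8})\,|x|$ satisfies $\vertiii{a} \le \|a\| \le (1+\e)\vertiii{a}$ for all $a \in S \cap Y$, while $|f(a)-f(b)| < \e$ throughout $S \cap Y$.

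I expect the main obstacle to be the treatment of $f$: since it is merely uniformly continuous, the Lipschitz form of the concentration inequality is unavailable, and one must instead run Lévy's isoperimetric inequality from a median, calibrating the neighbourhood radius $\eta$ against both the modulus of continuity of $f$ and the constant $\beta$ that converts Euclidean distance into $\|\cdot\|$-distance. A secondary issue is selecting a Euclidean position on $E$ for which $n(M/\beta)^2 \to \infty$; but because only the \emph{existence} of a $k$-dimensional section is required, with no demand on how large $n$ must be, the crude worst-case (logarithmic) growth of the Dvoretzky dimension is entirely adequate, and the sharp quantitative estimates needed in the finite-dimensional theory can be sidestepped.
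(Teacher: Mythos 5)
The paper offers no proof of this statement: it is labeled a \emph{Fact} and quoted as a classical concentration-of-measure result of Dvoretzky and Milman (via \cite{SHELAH2019106738}), so there is no in-paper argument to compare against. Your reconstruction follows the standard route (John/Dvoretzky--Rogers position so that the critical dimension $n(M/\beta)^2 \gtrsim \log n$ diverges, L\'evy isoperimetry for the norm and for $f$ via a median, then a Haar-random rotation of a net), and the architecture is right, including the correct observation that $f$, being merely uniformly continuous, must be handled from a median rather than a Lipschitz constant.

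There is, however, one genuine quantitative gap, and it sits exactly where the classical proof is delicate: the passage from the net to the whole sphere. You propose to upgrade ``using $\beta$-Lipschitz continuity for the norm,'' which forces the mesh $\delta$ of the net to be of order $\e/\beta$ and hence a net of cardinality $(C\beta/\e)^k \sim n^{k/2}$ in John position. But, as you yourself note, the bad set in the worst case only has measure $C\exp(-c\,\e^2 n/\beta^2) \approx n^{-c(\e)}$ (your later claim of $C\exp(-c'n)$ is inconsistent with your own remark that the exponent grows only like $\log n$, and it is precisely this overstatement that hides the problem); the union bound $n^{k/2}\cdot n^{-c(\e)} < 1$ then fails once $k$ is large. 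The standard repair is the successive-approximation trick: set $A=\sup\{\lVert y\rVert : y\in Y,\ |y|=1\}$, note $\lVert y\rVert \le \lVert Ux\rVert + A\delta$ for the nearest net point $x$, and solve to get $A\le (1+\e/8)/(1-\delta)$, which lets $\delta$ depend only on $\e$ (and, for the $f$-estimate, on the modulus of continuity of $f$) and keeps the net of size independent of $n$. A secondary point: $f$ is defined on the $\lVert\cdot\rVert$-sphere, so the function you concentrate on $S_E$ is $x\mapsto f(x/\lVert x\rVert)$, whose modulus of continuity degenerates where $\lVert x\rVert$ is small; the isoperimetric step should be run relative to the set on which the norm has already concentrated near $1$. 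Both issues disappear if you split the argument in two: first invoke Dvoretzky's theorem to obtain a $(1+\e)$-Euclidean section of enormous dimension, then run the median/isoperimetry/net argument for $f$ inside that section, where the relevant Lipschitz constant is at most $1+\e$.
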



Shelah and Usvyatsov showed that in a stable Banach theory every wide type has a minimal wide extension (possibly over a larger set of parameters) and that every Morley sequence in a minimal wide type is an orthonormal basis of a subspace isometric to a real Hilbert space. Furthermore, they showed that in an inseparably categorical Banach theory, every inseparable model is prime over a countable set of parameters and a Morley sequence in some minimal wide type, analogously to how a model of a discrete uncountably categorical theory is always prime over some finite set of parameters and a Morley sequence in some strongly minimal type.

The key ingredient to our present work is the following result, due to Milman. 
 It extends the Dvoretzky-Milman theorem in a manner analogous to the extension of the pigeonhole principle by Ramsey's theorem.\footnote{The original Dvoretzky-Milman result is often compared to Ramsey's theorem, such as when Gromov coined the term \emph{the Ramsey-Dvoretzky-Milman phenomenon} \cite{gromov1983}, but in the context of Fact \ref{fact:main} it is hard not to think of the $n=1$ case as being analogous to the pigeonhole principle and the $n>1$ cases as being analogous to Ramsey's theorem.}

\begin{defn}\label{defn:main-defn}
Let $(X,\left\lVert \cdot \right\rVert)$ be a Banach space. If $a_0,a_1,\dots,a_{n-1}$ and $b_0,b_1,\dots,\allowbreak b_{n-1}$ are ordered $n$-tuples of elements of $X$, we say that $\bar{a}$ and $\bar{b}$ are \emph{congruent} if $\left\lVert a_i - a_j\right\rVert=\left\lVert b_i - b_j \right\rVert$ for all $ i,j \leq n$, where we take $a_{n}=b_{n}=\mathbf{0}$. We will write this as $\bar{a} \cong \bar{b}$.
\end{defn}

\begin{fact}[\cite{zbMATH03376472}, Thm.\ 3] \label{fact:main}
Let $S^\infty$ be the unit sphere of a separable infinite dimensional real Hilbert space ${H}$ and let $f:(S^\infty)^n \rightarrow \mathbb{R}$ be a uniformly continuous function. For any $\varepsilon>0$ and any $k<\omega$ there exists a $k$-dimensional subspace $V$ of $H$ such that for any $a_0,a_1,\dots,a_{n-1},b_0,b_1,\dots,b_{n-1}\in S^\infty$ with $\bar{a} \cong \bar{b}$, $|f(\bar{a})-f(\bar{b})| < \varepsilon$.
\end{fact}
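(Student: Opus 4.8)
The plan is to deduce this from concentration of measure on the orthogonal group, re-proving a Ramsey-type functional strengthening of Dvoretzky's theorem; the subspace $V$ in the conclusion should of course be read as supplying its own unit sphere, so the relevant tuples $\bar a, \bar b$ are taken from $S^\infty \cap V$. For unit vectors $\|a_i - a_j\|^2 = 2 - 2\langle a_i, a_j\rangle$, so the congruence type of $\bar a$ is encoded by its Gram matrix $G(\bar a) = (\langle a_i, a_j\rangle)_{i,j<n}$, and two tuples of unit vectors are congruent if and only if they have the same Gram matrix---equivalently, by the homogeneity of Hilbert space, if and only if they lie in a common orbit of $O(H)$. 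Writing $\mathcal{G}$ for the compact set of admissible Gram matrices (positive semidefinite, diagonal $1$), it suffices to produce a $k$-dimensional $V$ together with a single function $\phi\colon \mathcal{G} \to \mathbb{R}$ such that $|f(\bar a) - \phi(G(\bar a))| < \varepsilon/2$ for every $\bar a \in (S^\infty \cap V)^n$; congruent tuples then share a type and hence have $f$-values within $\varepsilon$. For $n=1$ every unit vector is congruent to every other, so the statement is exactly Fact~\ref{fact:DM-thm}; the content is the genuine congruence invariants that appear for $n \geq 2$. As a preliminary reduction, since $(S^\infty)^n$ has bounded diameter and is path-connected, a uniformly continuous $f$ is bounded, and an inf-convolution $\bar x \mapsto \inf_{\bar y}(f(\bar y) + L\,d(\bar x,\bar y))$ approximates it uniformly within $\varepsilon/6$ by an $L$-Lipschitz function for $L$ large; so I may assume $f$ is $L$-Lipschitz.

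The engine is concentration of measure. Using the continuous positive-semidefinite square root, attach to each $G \in \mathcal{G}$ a model configuration $\bar u(G) = (u_0(G),\dots,u_{n-1}(G))$ in $\mathbb{R}^n$ (the columns of $G^{1/2}$) realizing $G$ and depending continuously on $G$. Then every tuple of type $G$ lying in a subspace is $W\bar u(G) := (Wu_0(G),\dots,Wu_{n-1}(G))$ for a linear isometry $W$ of $\mathbb{R}^n$ into the ambient space, i.e.\ for an $n$-frame. Fixing an $N$-dimensional subspace $\mathbb{R}^N \subseteq H$ with $N$ large, the assignment $W \mapsto f(W\bar u(G))$ is a function on the Stiefel manifold $V_n(\mathbb{R}^N)$ of $n$-frames, Lipschitz with a constant depending only on $n$ and $L$. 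Since the family $\{V_n(\mathbb{R}^N)\}_N$ (equivalently $\{O(N)\}_N$) is a Lévy family---Lipschitz functions concentrate around their medians at rate $Ce^{-cN\eta^2}$---each such function lies within $\eta$ of a median value $\phi_N(G)$ off a set of measure at most $Ce^{-cN\eta^2}$, uniformly in $G$ by compactness of $\mathcal{G}$ and uniform continuity of $G \mapsto \bar u(G)$. Take $\phi := \phi_N$ for the $N$ fixed below.

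Finally I would run the Dvoretzky subspace-extraction argument to upgrade ``almost every frame is good'' to ``an entire $k$-dimensional subspace is good.'' Fix a finite $\delta$-net $G_1,\dots,G_p$ of $\mathcal{G}$ and a finite $\beta$-net $\mathcal{F}$ of the $n$-frames of a fixed $\mathbb{R}^k$. Draw $V$ at random from the invariant measure on the Grassmannian of $k$-planes in $\mathbb{R}^N$, and fix an isometry $\iota\colon \mathbb{R}^k \to V$; for each net frame $W_0 \in \mathcal{F}$ the composite $\iota W_0$ is an $n$-frame in $V \subseteq \mathbb{R}^N$, and as $V$ varies this composite is distributed as a Haar-random $n$-frame in $\mathbb{R}^N$. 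By the concentration estimate, the event that $f((\iota W_0)\bar u(G_j))$ deviates from $\phi(G_j)$ by more than $\eta$ for some $j \leq p$ and some $W_0 \in \mathcal{F}$ has probability at most $p\,|\mathcal{F}|\,Ce^{-cN\eta^2}$, which is $<1$ once $N$ is large relative to $k, n, \eta, \beta, \delta$. Hence some fixed $k$-dimensional $V$ is simultaneously good at all net frames and all net types, and the Lipschitz bounds on $f$ and on $W \mapsto f(W\bar u(G))$, together with uniform continuity of $G \mapsto \bar u(G)$, promote this (for $\delta, \beta, \eta$ small) to $|f(\bar a) - \phi(G(\bar a))| < \varepsilon/2$ for all $\bar a \in (S^\infty \cap V)^n$. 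The main obstacle is exactly this last step: securing the concentration rate with constants independent of $N$ and balancing the two nets so the union bound still closes after $N \to \infty$; a secondary nuisance is that $G \mapsto G^{1/2}$ is only uniformly continuous (Hölder, not Lipschitz) near matrices of non-maximal rank, so the ``type'' direction must be controlled through its modulus of continuity rather than a Lipschitz constant.
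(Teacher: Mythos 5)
This statement is quoted in the paper as a black box: it is Milman's theorem (\cite{zbMATH03376472}, Thm.\ 3), cited without proof, and the only methodological remark the paper makes is that a modern proof would route through the extreme amenability of the unitary group with the strong operator topology. So there is nothing in the paper to compare line-by-line against; what you have written is essentially Milman's original concentration-of-measure argument, and in outline it is sound. The reduction of congruence to Gram matrices (using that the convention $a_n=b_n=\mathbf{0}$ forces the norms to match, so for unit vectors congruence is exactly equality of inner products), the parametrization of all tuples of a given type $G$ as $W\bar u(G)$ for an $n$-frame $W$, the L\'evy-family concentration on $V_n(\mathbb{R}^N)$ with constants independent of $N$, and the two-net union bound are all standard and do fit together: the sizes of the net of $\mathcal{G}$ and of the net of $n$-frames in $\mathbb{R}^k$ depend only on $n,k,\delta,\beta$ and not on $N$, so the union bound closes once $N$ is large, and the worry you flag at the end is not a genuine obstacle. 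Two small points deserve more care than your sketch gives them. First, ``draw $V$ from the Grassmannian and fix an isometry $\iota$'' should be replaced by ``draw $U$ Haar-randomly from $O(N)$ and set $V=U(\mathbb{R}^k)$, $\iota=U\restriction\mathbb{R}^k$,'' so that $\iota W_0=UW_0$ is genuinely Haar-distributed on the Stiefel manifold; as written, the distribution of $\iota W_0$ depends on how $\iota$ is ``fixed.'' Second, you need $k\geq n$ (harmless, since shrinking $V$ preserves the conclusion) so that every $\bar a\in (S^\infty\cap V)^n$ factors as $\iota W'\bar u(G)$ with $W'$ a frame of $\mathbb{R}^k$. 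Your observation that $G\mapsto G^{1/2}$ is only H\"older near singular $G$ is correct and correctly handled by working with moduli of continuity. One could also note that your $n=1$ base case is the Hilbert-space instance of Fact \ref{fact:DM-thm}, which is consistent with the paper's framing of Fact \ref{fact:main} as the Ramsey-theorem-to-pigeonhole upgrade of Dvoretzky--Milman.
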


Note that the analogous result for inseparable Hilbert spaces follows immediately, by restricting attention to a separable infinite dimensional subspace. Also note that by using Dvoretzky's theorem and an easy compactness argument, Fact~\ref{fact:main} can be generalized to arbitrary infinite dimensional Banach spaces. Also note that while Fact~\ref{fact:DM-thm} and Fact~\ref{fact:main} are stated for real Banach spaces, analogous statements for complex Banach spaces can be easily derived from them.

\subsection{Connection to Extreme Amenability}

 A modern proof of Fact \ref{fact:main} would go through the extreme amenability of the unitary group of an infinite dimensional Hilbert space endowed with the strong operator topology, or in other words the fact that any continuous action of this group on a compact Hausdorff space has a fixed point, which was originally shown in \cite{10.2307/2374298}.  This connection is unsurprising. It is well known that the extreme amenability of $\mathrm{Aut}(\mathbb{Q})$ (endowed with the topology of pointwise convergence) can be understood as a restatement of Ramsey's theorem. It is possible to use this to give a high brow proof of the existence of indiscernible sequences in any first-order theory $T$:
 
 \begin{proof}
Fix a first-order theory $T$. Let $Q$ be a family of variables indexed by the rational numbers. The natural action of $\mathrm{Aut}(\mathbb{Q})$ on $S_Q(T)$, the Stone space of types over $T$ in the variables $Q$, is continuous and so by extreme amenability has a fixed point. A fixed point of this action is precisely the same thing as the type of a $\mathbb{Q}$-indexed indiscernible sequence over $T$, and so we get that there are models of $T$ with indiscernible sequences.
 \end{proof}

 A similar proof of the existence of indiscernible subspaces in Banach theories (Theorem \ref{thm:exist}) is possible, but requires an argument that the analog of $S_Q(T)$ is non-empty (which follows from Dvoretzky's theorem) and also requires more delicate bookkeeping to define the analog of $S_Q(T)$ and to show that the action of the unitary group of a separable Hilbert space is continuous. In the end this is more technical than a proof using Fact \ref{fact:main} directly. 

\section{Indiscernible Subspaces} \label{sec:ind-subsp}

\begin{defn}
 Let $T$ be a Banach {theory}. Let $\mathfrak{M}\models T$ and let $A\subseteq \mathfrak{M}$ be some set of parameters. 
 An \emph{indiscernible subspace over $A$} is a real subspace $V$ of $\mathfrak{M}$ such that for any $n<\omega$ and any $n$-tuples $\bar{b},\bar{c} \in V$, $\bar{b} \equiv_A \bar{c}$ if and only if $\bar{b} \cong \bar{c}$.

{\sloppy If $p$ is a type over $A$, then $V$ is an \emph{indiscernible subspace in $p$ (over $A$)} if it is an indiscernible subspace over $A$ and $b\models p$ for all $b\in V$ with $\left\lVert b \right\rVert = 1$.}


\end{defn}

Note that, as we have defined it, an indiscernible subspace is a real subspace even if $T$ is a complex Banach theory. Also note that an indiscernible subspace in $p$ is not literally contained in the realizations of $p$, but rather has its unit sphere contained in the realizations of $p$. It might be more accurate to talk about ``indiscernible spheres,'' but we find the subspace terminology more familiar.

Indiscernible subspaces are very metrically regular.

\begin{prop}
Suppose $V$ is an indiscernible subspace in some Banach structure. Then $V$ is isometric to a real Hilbert space.

In particular, a real subspace $V$ of a Banach structure is indiscernible over $A$ if and only if it is isometric to a real Hilbert space and for every $n<\omega$ and every pair of $n$-tuples $\bar{b},\bar{c}\in V$, $\bar{b}\equiv_A\bar{c}$ if and only if for all $i,j<n$ $\left<b_i,b_j\right> = \left<c_i,c_j\right>$. 
\end{prop}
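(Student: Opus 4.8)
The plan is to prove the substantive first claim---that an indiscernible subspace $V$ is isometric to a real Hilbert space---and then read off the ``in particular'' characterization from it by polarization. The whole argument rests on a single consequence of indiscernibility: if $\bar b, \bar c \in V$ are $n$-tuples with $\bar b \cong \bar c$, then $\bar b \equiv_A \bar c$, so the $\emptyset$-definable predicates $s_{\bar\lambda}$ agree on them, giving $\lVert \sum_i \lambda_i b_i \rVert = \lVert \sum_i \lambda_i c_i \rVert$ for every real tuple $\bar\lambda$ (the reals lie in $K$, and these norms are evaluated in the ambient space regardless of membership in the unit ball). Equivalently, whenever $\bar b \cong \bar c$ with $\bar b$ and $\bar c$ each linearly independent, the assignment $b_i \mapsto c_i$ extends to a surjective linear isometry between their spans. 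Since the parallelogram law involves only two vectors at a time, by the Jordan--von Neumann theorem it suffices to show that the norm is Euclidean on every two-dimensional subspace $W = \mathrm{span}(x,y)$ of $V$ (the case $\dim W \le 1$ being trivial); the inner product then obtained by polarization makes $V$ an inner product space whose completion, namely the closure of $V$ in the complete ambient structure---a closed real subspace on which the parallelogram law persists---is the desired real Hilbert space into which $V$ embeds isometrically.

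First I would fix such a two-dimensional $W$ and apply the homogeneity internally: for any two pairs $(u_1,u_2), (v_1,v_2)$ of linearly independent unit vectors of $W$ with $\lVert u_1 - u_2\rVert = \lVert v_1 - v_2\rVert$ we have $(u_1,u_2)\cong(v_1,v_2)$, so $u_i \mapsto v_i$ extends to a surjective linear isometry of $W$. Let $G$ be the group of surjective linear isometries of $W$. I claim $G$ acts transitively on the unit sphere $S$ of $W$. Indeed, $S$ is a centrally symmetric topological circle, and for a fixed $u \in S$ the continuous function $w \mapsto \lVert u - w\rVert$ runs from $0$ (at $w = u$) to $2$ (at $w = -u$), hence attains some value $t_0 \in (0,2)$ at a point $u_2 \neq \pm u$, which is therefore independent from $u$. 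Given arbitrary $u, v \in S$, choosing partners $u_2, v_2$ at the common distance $t_0$ yields $(u, u_2) \cong (v, v_2)$ and thus an element of $G$ sending $u$ to $v$, establishing transitivity.

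To finish I would average. The group $G$ is a closed, norm-bounded subgroup of $GL(W)$, hence compact, so it carries a Haar probability measure $\mu$; averaging any background inner product $\langle\cdot,\cdot\rangle_0$ as $\langle x, y\rangle = \int_G \langle gx, gy\rangle_0\, d\mu(g)$ produces a positive-definite $G$-invariant inner product with norm $\lvert\cdot\rvert$. Because $G$ acts transitively on $S$ and preserves $\lvert\cdot\rvert$, this norm is constant on $S$, say equal to $c$; homogeneity then gives $\lvert x\rvert = c\lVert x\rVert$ for all $x \in W$, so $\lVert\cdot\rVert$ is a scalar multiple of a Euclidean norm and $W$ is Euclidean. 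For the ``in particular'' clause, once $V$ is an inner product space the polarization identity $\langle b_i, b_j\rangle = \tfrac12(\lVert b_i\rVert^2 + \lVert b_j\rVert^2 - \lVert b_i - b_j\rVert^2)$ shows that $\bar b \cong \bar c$ is equivalent to $\langle b_i, b_j\rangle = \langle c_i, c_j\rangle$ for all $i,j$; substituting this equivalence into the definition of indiscernibility yields both directions of the stated biconditional.

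I expect the main obstacle to be the passage from the homogeneity of $V$ to Euclidean structure on its planes, that is, the transitivity-plus-averaging step: one must ensure that linearly independent partner vectors at a prescribed sphere-distance genuinely exist (so that one obtains an honest isometry of $W$ rather than a degenerate map), and that compactness of $G$ is secured before the averaging argument is invoked. The surrounding reductions---to two dimensions via Jordan--von Neumann, to a Hilbert space via closure, and to the Gram-matrix reformulation via polarization---are comparatively routine.
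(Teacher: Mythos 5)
Your proof is correct and follows the same overall skeleton as the paper's: establish that congruent tuples have isometric spans, reduce to two-dimensional subspaces via the parallelogram law, show the linear isometry group of such a plane acts transitively on its unit circle, conclude the plane is Euclidean, and finish with polarization. The one genuine divergence is the last analytic step: where you average a background inner product over the Haar measure of the compact isometry group $G$ and use transitivity to see the averaged norm is proportional to $\left\lVert\cdot\right\rVert$, the paper invokes John's theorem on maximal ellipsoids (the John ellipsoid is preserved by every linear isometry, and a transitive group forces the unit ball to coincide with a multiple of it). Both are standard proofs of the same key lemma --- a finite-dimensional normed space whose isometry group is transitive on the unit sphere is Euclidean --- and neither buys much over the other here; your averaging argument is self-contained modulo existence of Haar measure, while John's theorem is a one-line citation. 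Your transitivity argument is also slightly more elaborate than necessary: rather than choosing partner vectors at a common distance $t_0$, one can simply note that for linearly independent unit vectors $u,v$ the congruence $(u,v)\cong(v,u)$ already makes the swap $u\mapsto v$, $v\mapsto u$ an isometry of the plane, which gives transitivity directly. These are cosmetic differences; the proposal is sound.
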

\begin{proof}
For any real Banach space $W$, if $\dim W \leq 1$, then $W$ is necessarily isometric to a real Hilbert space. If $\dim V \geq 2$, let $V_0$ be a $2$-dimensional subspace of $V$. A subspace of an indiscernible subspace is automatically an indiscernible subspace, so $V_0$ is indiscernible. For any two distinct unit vectors $a$ and $b$, indiscernibility implies that for any $r,s\in \mathbb{R}$, $\left\lVert r a + s b\right\rVert = \left\lVert s a + r b\right\rVert$, hence the unique linear map that switches $a$ and $b$ fixes $\left\lVert \cdot \right \rVert$. This implies that the automorphism group of $(V_0, \left\lVert \cdot \right \rVert)$ is transitive on the $\left\lVert \cdot \right\rVert$-unit circle. By John's theorem on maximal ellipsoids \cite{MR0030135}, the unit ball of $\left\lVert \cdot \right \rVert$ must be an ellipse, so $\left\lVert \cdot \right \rVert$ is a Euclidean norm.

Thus every $2$-dimensional real subspace of $V$ is Euclidean and so $(V,\left\lVert \cdot \right \rVert)$ satisfies the parallelogram law and is therefore a real Hilbert space.

The `in particular' statement follows from the fact that in a real Hilbert subspace of a Banach space, the polarization identity \cite[Prop.\ 14.1.2]{Blanchard2002} defines the inner product  in terms of a particular quantifier-free formula:
\begin{equation*}
\left<x, y\right> = \frac{1}{4}\left( \left\lVert x + y \right\rVert ^2 - \left\lVert x - y \right\rVert^2 \right).\footnotemark \qedhere
\end{equation*}
\end{proof}
\footnotetext{There is also a polarization identity for the complex inner product: $${\left<x, y\right>_{\mathbb{C}} = \frac{1}{4}\left( \left\lVert x + y \right\rVert ^2 - \left\lVert x - y \right\rVert^2  + i\left\lVert x - iy \right\rVert^2 - i \left\lVert x + iy \right\rVert^2  \right).}$$}

\subsection{Existence of Indiscernible Subspaces}


As mentioned in \cite[Cor.\ 3.9]{SHELAH2019106738}, it follows from Dvoretzky's theorem that if $p$ is a wide type and $\mathfrak{M}$ is a sufficiently saturated model, then $p(\mathfrak{M})$ contains the unit sphere of an infinite dimensional subspace isometric to a Hilbert space. We refine this by showing that, in fact, an indiscernible subspace can be found.  

\begin{thm} \label{thm:exist} 
Let $A$ be a set of parameters in a Banach {theory} $T$ and let $p$ be a wide type over $A$. For any $\kappa$, there is $\mathfrak{M} \models T$ and a subspace $V\subseteq \mathfrak{M}$ of dimension $\kappa$ such that $V$ is an indiscernible subspace in $p$ over $A$. In particular, any $\aleph_0 + \kappa+|A|$-saturated $\mathfrak{M}$ will have such a subspace.
\end{thm}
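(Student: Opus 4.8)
The plan is to realize an indiscernible subspace as a single (partial) type $\Gamma$ over $A$ in a family of variables $(x_i)_{i<\kappa}$ indexing the intended orthonormal basis, and then to invoke saturation. Concretely, I would take $\Gamma$ to consist of three groups of conditions: (i) the orthonormality conditions $\left\lVert \sum_i a_i x_i\right\rVert = \sqrt{\sum_i a_i^2}$ for all finitely-supported rational $\bar a$, which pin down the closed span of any realization to be a Hilbert space with the $x_i$ as an orthonormal basis; (ii) the conditions asserting $\sigma_{\bar a}(\bar x) \models p$ for every rational $\bar a$ with $\sum_i a_i^2 = 1$; and (iii) the indiscernibility conditions $\varphi(\sigma_{\bar a_1}(\bar x),\dots,\sigma_{\bar a_m}(\bar x)) = \varphi(\sigma_{\bar a'_1}(\bar x),\dots,\sigma_{\bar a'_m}(\bar x))$ for every $A$-formula $\varphi$ and every pair of rational coefficient data whose associated tuples are forced congruent by (i). Because $A$-formulas are uniformly continuous and the $\sigma_{\bar a}$ are continuous, the rational combinations are dense enough that (i)--(iii) force the closed span $V$ of a realization to be a genuine indiscernible subspace in $p$ of dimension $\kappa$; realizing $\Gamma$ in any $\aleph_0 + \kappa + |A|$-saturated model then gives the theorem. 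By continuous compactness it remains to show $\Gamma$ is finitely (approximately) satisfiable.

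For finite satisfiability I would first fix, inside a monster model, an infinite dimensional subspace $W$ isometric to a separable Hilbert space whose unit sphere lies in $p(\mathfrak{M})$; this is exactly the consequence of Dvoretzky's theorem recorded as \cite[Cor.\ 3.9]{SHELAH2019106738} and noted just before the theorem. A finite fragment $\Gamma_0$ involves only finitely many basis variables (say $k$ of them), finitely many $A$-formulas, and a precision $\varepsilon > 0$. The goal is a $k$-dimensional subspace $V \subseteq W$ witnessing $\Gamma_0$: the conditions of type (i) hold exactly for any orthonormal basis of $V$, and those of type (ii) hold because the unit sphere of $V \subseteq W$ already realizes $p$, so only the indiscernibility conditions (iii) require work. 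To set these up for Fact \ref{fact:main} I would reduce to the unit sphere: restricting to combinations of norm $\le 1$, each entry $\sigma_{\bar a}(\bar x)$ is the honest combination $\sum_i a_i x_i$, and writing it as $\sigma_{t}(\hat u)$ for its normalization $\hat u$ and scalar $t = \left\lVert \sum_i a_i x_i \right\rVert \le 1$, the value $\varphi(\dots)$ equals $\varphi'(\dots)$ for the composed $A$-formula $\varphi' = \varphi \circ (\sigma_{t_1},\dots,\sigma_{t_m})$ evaluated on the corresponding unit vectors (both tuples in a given condition share the same scaling data, since being forced congruent forces equal norms). Feeding the finite family of these composed formulas, as uniformly continuous functions on tuples from the unit sphere of $W$ transported to $H$ via the isometry, into Fact \ref{fact:main} produces a $k$-dimensional $V \subseteq W$ on which each is $\varepsilon$-congruence-invariant; since congruence of unit tuples is exactly equality of their Gram matrices, this yields (iii) up to $\varepsilon$ on $V$, completing finite satisfiability.

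The main obstacle is this last step: Fact \ref{fact:main} as stated controls a single real-valued function, whereas a fragment forces several composed formulas to be \emph{simultaneously} congruence-invariant on one common subspace, and the finite dimensional conclusion blocks the naive remedy of nesting infinite dimensional subspaces one formula at a time (oscillation stability genuinely fails for the Hilbert sphere, as $\ell_2$ is distortable). I would therefore use the vector-valued (equivalently, compact-metric-target) strengthening of Fact \ref{fact:main}, in which $f$ maps into $\mathbb{R}^r$ and the conclusion bounds $\left\lVert f(\bar a) - f(\bar b)\right\rVert$ on congruent tuples; this follows from the same concentration-of-measure argument underlying Fact \ref{fact:main}, since L\'evy's isoperimetric inequality applies to Lipschitz maps regardless of the dimension of the codomain. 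Beyond this, the remaining effort is bookkeeping: verifying that the density of rational combinations together with uniform continuity upgrades the approximate, rational-coefficient conditions to exact indiscernibility of the closed span, and that the closed span has Hilbert dimension exactly $\kappa$.
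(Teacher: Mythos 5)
Your overall architecture differs from the paper's: you encode the entire indiscernible subspace as a single partial type $\Gamma$ in $\kappa$ variables and spend compactness once at the end, whereas the paper builds the subspace by induction on finite sets of formulas, using Fact \ref{fact:main} for \emph{one} formula at a time and then taking an ultraproduct of the resulting $k$-dimensional witnesses (with a distinguished predicate for the subspace) to recover an \emph{infinite dimensional} subspace before the next formula is added; a final ultraproduct over finite $\Delta$ and a compactness step for the dimension finish the job. You correctly isolate the crux --- Fact \ref{fact:main} controls a single real-valued function, its conclusion is finite dimensional, and oscillation stability fails, so one cannot simply iterate --- but your remedy (a vector-valued strengthening of Fact \ref{fact:main}) is a statement the paper neither proves nor cites, and it does not follow formally from Fact \ref{fact:main} as a black box: justifying it means reopening Milman's concentration-of-measure proof for $\mathbb{R}^r$-valued maps. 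That strengthening is true and your L\'evy-isoperimetry justification is the right one, so your route is viable; but note that the cheapest way to derive the simultaneous/vector-valued version from the scalar one \emph{without} reopening the analytic proof is precisely the paper's iterate-then-ultraproduct maneuver, at which point the two proofs essentially coincide. The other place where your write-up is thinner than it should be is the ``bookkeeping'' of upgrading conditions imposed only on rational-coefficient combinations of the $x_i$ to exact indiscernibility of all congruent tuples in the closed span: this needs the quantitative step the paper spells out in its Claim --- a modulus $\delta(\e)$, uniform over all $2m$-dimensional subspaces since $B(W')^m \cong B(\mathbb{R}^{2m})^m$, converting ``Gram matrices within $\delta$'' into ``formula values within $\e$'' --- together with a density argument for congruent \emph{pairs} of rational tuples among congruent pairs of arbitrary tuples (e.g.\ via density of rational orthogonal matrices), which is not automatic from density of rational points alone.
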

\begin{proof}
For any set $\Delta$ of $A$-formulas, call a subspace $V$ of a model $\mathfrak{N}$ of $T_A$ \emph{$\Delta$-indiscernible in $p$} if every unit vector in $V$ models $p$ and for any $n<\omega$ and any formula $\varphi \in \Delta$ of arity $n$ and any $n$-tuples $\bar{b},\bar{c} \in V$ with $\bar{b} \cong \bar{c}$, we have $\mathfrak{N}\models \varphi(\bar{b}) = \varphi(\bar{c})$.

Since $p$ is wide, there is a model $\mathfrak{N}\models T$ containing an infinite dimensional subspace $W$ isometric to a real Hilbert space such that for all $b\in W$ with $\left\lVert b \right\rVert = 1$, $b\models p$. This is an infinite dimensional $\varnothing$-indiscernible subspace in $p$.

Now for any finite set of $A$-formulas $\Delta$ and formula $\varphi$, assume that we've shown that there is a model $\mathfrak{N}\models T$ containing an infinite dimensional $\Delta$-indiscernible subspace $V$ in $p$ over $A$. We want to show that there is a $\Delta \cup \{\phi\}$-indiscernible subspace in $V$. By Fact \ref{fact:main}, for every $k<\omega$ there is a $k$-dimensional subspace $W_{k}\subseteq V$ such that for any unit vectors $b_0,\dots,b_{\ell -1},c_0,\dots,c_{\ell-1}$ in $W_{k}$ with $\bar{b}\cong\bar{c}$, we have that $|\varphi^{\mathfrak{N}}(\bar{b})-\varphi^{\mathfrak{N}}(\bar{c})| < 2^{-k}$. If we let $\mathfrak{N}_k = (\mathfrak{N}_k,W_k)$ where we've expanded the language by a fresh predicate symbol $D$ such that $D^{\mathfrak{N}_k}(x)=d(x,W_k)$, then an ultraproduct of the sequence $\mathfrak{N}_k$ will be a structure $(\mathfrak{N}_\omega,W_\omega)$ in which $W_\omega$ is an infinite dimensional Hilbert space.

\emph{Claim:} $W_\omega$ is $\Delta\cup\{\varphi\}$-indiscernible in $p$. 

\emph{Proof of claim.} Fix an $m$-ary formula $\psi \in \Delta \cup \{\varphi\}$ and let $f(k)=0$ if $\psi \in \Delta$ and $f(k)=2^{-k}$ if $\psi = \varphi$. For any $k \geq 2m$, fix $b_0,\dots,b_{m-1},c_0,\dots,c_{m-1}$ in the unit ball of $W_k$, there is a $2m$ dimensional subspace $W^\prime \subseteq W_k$ containing $\bar{b},\bar{c}$. By compactness of $B(W^\prime)^m$ (where $B(X)$ is the unit ball of $X$), we have that for any $\e > 0$ there is a $\delta(\e) > 0$ such that if $|\left<b_i,b_j \right> - \left<c_i,c_j \right>| < \delta(\e)$ for all $i,j < m$ then $|\psi^{\mathfrak{N}}(\bar{b})-\psi^{\mathfrak{N}}(\bar{c})| \leq f(k) + \e$. Note that we can take the function $\delta$ to only depend on $\psi$, specifically its arity and modulus of continuity, and not on $k$, since $B(W^\prime)^m$ is always isometric to $B(\mathbb{R}^{2m})^m$. Therefore, in the ultraproduct we will have $(\forall i,j<m)|\left<b_i,b_j \right> - \left<c_i,c_j \right>| < \delta(\e) \Rightarrow |\psi^{\mathfrak{N}}(\bar{b})-\psi^{\mathfrak{N}}(\bar{c})| \leq \e$ and thus  $\bar{b}\cong \bar{c} \Rightarrow \psi^{\mathfrak{N}_\omega}(\bar{b}) = \psi^{\mathfrak{N}_\omega}(\bar{c})$, as required. \hfill $\qed_{\textit{Claim}}$

Now for each finite set of $A$-formulas we've shown that there's a structure $(\mathfrak{M}_\Delta,V_\Delta)$ (where, again, $V_\Delta$ is the set defined by the new predicate symbol $D$) such that $\mathfrak{M}_\Delta \models T_A$ and $V_\Delta$ is an infinite dimensional $\Delta$-indiscernible subspace in $p$. By taking an ultraproduct with an appropriate ultrafilter we get a structure $(\mathfrak{M},V)$ where $\mathfrak{M}\models T_A$ and $V$ is an infinite dimensional subspace. $V$ is an indiscernible subspace in $p$ over $A$ by the same argument as in the claim.

Finally note that by compactness we can take $V$ to have arbitrarily large dimension and that any subspace of an indiscernible subspace in $p$ over $A$ is an indiscernible subspace in $p$ over $A$, so we get the required result.
\end{proof}

Together with the fact that wide types always exist in Banach theories  with infinite dimensional models \cite[Thm.\ 3.7]{SHELAH2019106738}, we get a corollary. 

\begin{cor} \label{cor:ind-subsp}
Every Banach {theory} with infinite dimensional models has an infinite dimensional indiscernible subspace in some model. In particular, every such theory has an infinite indiscernible set, namely any orthonormal basis of an infinite dimensional indiscernible subspace.
\end{cor}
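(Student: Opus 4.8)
The plan is to obtain this as a near-immediate consequence of Theorem~\ref{thm:exist}, supplemented by the cited existence of wide types and a short computation of congruence classes. The statement has two halves: the existence of an infinite dimensional indiscernible subspace, and the identification of its orthonormal basis as an indiscernible set.

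For the first half, I would start from \cite[Thm.\ 3.7]{SHELAH2019106738}, which guarantees that a Banach theory $T$ with infinite dimensional models has a wide complete type $p$, say over $\varnothing$. Applying Theorem~\ref{thm:exist} with $\kappa = \aleph_0$ then produces a model $\mathfrak{M} \models T$ together with an $\aleph_0$-dimensional indiscernible subspace $V \subseteq \mathfrak{M}$ in $p$ over $\varnothing$. This already yields the first sentence of the corollary.

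For the second half, recall that by the structural Proposition above, $V$ is isometric to a real Hilbert space, so it has an orthonormal basis $\{e_i : i \in I\}$, which is infinite since $\dim V = \aleph_0$. I would then verify that this basis is an indiscernible set, i.e.\ that any two tuples of distinct basis vectors have the same type over $\varnothing$. The key observation is that congruence is automatic for orthonormal tuples: if $e_{i_1}, \dots, e_{i_n}$ are distinct basis vectors then $\left\lVert e_{i_k} \right\rVert = 1$ for all $k$ and $\left\lVert e_{i_k} - e_{i_l} \right\rVert = \sqrt{2}$ for all $k \neq l$, and these values do not depend on which distinct basis vectors were chosen. Hence for any two injective $n$-tuples $\bar{e}$ and $\bar{e}^\prime$ drawn from the basis we have $\bar{e} \cong \bar{e}^\prime$ (taking $\mathbf{0}$ as the last coordinate, as in Definition~\ref{defn:main-defn}), and the defining property of an indiscernible subspace gives $\bar{e} \equiv \bar{e}^\prime$. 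Since this holds for all such pairs and all $n$, the basis is totally indiscernible, which is exactly the assertion that it is an infinite indiscernible set.

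I expect no substantial obstacle here: the content is carried entirely by Theorem~\ref{thm:exist} and the structural Proposition, and the only thing to check by hand is the congruence computation above, together with the routine point that an orthonormal basis of an infinite dimensional Hilbert space is infinite. The one subtlety worth stating explicitly is that ``indiscernible set'' should be read in the sense of total (permutation-invariant) indiscernibility, and that this is precisely what the uniform congruence of orthonormal tuples delivers.
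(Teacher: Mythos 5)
Your proposal is correct and follows the same route as the paper: the corollary is obtained by combining the existence of wide types \cite[Thm.\ 3.7]{SHELAH2019106738} with Theorem~\ref{thm:exist}, and the ``in particular'' clause is exactly the observation that all injective tuples of orthonormal vectors are congruent (norms $1$ and pairwise distances $\sqrt{2}$), hence equivalent by indiscernibility of the subspace. The paper leaves this last computation implicit; you have simply written it out.
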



\section{Minimal{ }Wide Types}



Compare the following Theorem \ref{thm:main} with this fact in discrete logic: If $p$ is a minimal type (i.e.\ $p$ has a unique global non-algebraic extension), then an infinite sequence of realizations of $p$ is a Morley sequence in $p$ if and only if it is an indiscernible sequence.

Here we are using the definition of Morley sequence for (possibly unstable) $A$-invariant types: Let $p$ be a global $A$-invariant type, and let $B\supseteq A$ be some set of parameters. A sequence $\{c_i\}_{i< \kappa}$ is a \emph{Morley sequence in $p$ over $B$} if for all $i< \kappa$, $\mathrm{tp}(c_i/Bc_{<i}) = p \upharpoonright Bc_{<i}$. Note that this definition of Morley sequence agrees with the standard definition for types that are stable in the sense of Lascar and Poizat (as described in \cite[Def.\ 4.1]{SHELAH2019106738}).

\begin{thm} \label{thm:main}
Let $p$ be a minimal{ }wide type over the set $A$. For $\kappa\geq \aleph_0$, a set of realizations $\{b_i\}_{i<\kappa}$ of $p$ is a Morley sequence in (the unique global minimal wide extension of) $p$ if and only if it is an orthonormal basis of an indiscernible subspace in $p$ over $A$. 
\end{thm}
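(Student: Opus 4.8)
The plan is to prove the two biconditional directions separately, with the easier implication feeding into the harder one. Throughout I would work inside a monster model and write $\mathfrak{p}$ for the unique global wide extension of $p$. Since $p$ is a type over $A$ and wide extensions are unique, every $\sigma\in\mathrm{Aut}(\mathfrak{M}/A)$ fixes $\mathfrak{p}$, so $\mathfrak{p}$ is $A$-invariant and ``Morley sequence in $\mathfrak{p}$'' is meaningful. I would also record at the outset that for every $B\supseteq A$ the restriction $\mathfrak{p}\upharpoonright B$ is exactly the unique wide extension of $p$ to $B$: a restriction of a wide type is wide (its realization set only grows), it extends $p$, and minimality pins it down.

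For the direction ``orthonormal basis of an indiscernible subspace $\Rightarrow$ Morley sequence,'' suppose $\{b_i\}_{i<\kappa}$ is an orthonormal basis of an indiscernible subspace $V$ in $p$ over $A$, and fix $i<\kappa$. Recall that $V$ is isometric to a real Hilbert space; the closed span of $\{b_j\}_{j<i}$ has dimension $<\kappa$, so its orthogonal complement inside $V$ is infinite dimensional. Any unit vector $v$ in that complement is orthogonal to each $b_j$ with $j<i$ and has norm $1$, hence $(b_{<i},v)\cong(b_{<i},b_i)$, so by indiscernibility $v\equiv_{Ab_{<i}}b_i$. Therefore the realizations of $\mathrm{tp}(b_i/Ab_{<i})$ contain the unit sphere of an infinite dimensional subspace, i.e. this type is wide; extending $p$, it must equal $\mathfrak{p}\upharpoonright Ab_{<i}$. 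This is precisely the Morley condition, so $\{b_i\}$ is a Morley sequence in $\mathfrak{p}$.

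For the converse, let $\{b_i\}_{i<\kappa}$ be a Morley sequence in $\mathfrak{p}$ over $A$. By Theorem \ref{thm:exist} there is an indiscernible subspace in $p$ over $A$ of dimension $\kappa$; let $\{e_i\}_{i<\kappa}$ be an orthonormal basis of it. By the implication just proved, $\{e_i\}$ is itself a Morley sequence in $\mathfrak{p}$ over $A$. I would then invoke the standard fact that any two Morley sequences of the same length in a fixed $A$-invariant global type have the same type over $A$, obtaining $\{b_i\}\equiv_A\{e_i\}$. By homogeneity of the monster there is $\tau\in\mathrm{Aut}(\mathfrak{M}/A)$ with $\tau(e_i)=b_i$ for all $i$. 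Since $\tau$ preserves the linear-combination operations, the metric, and $\mathbf{0}$, the image $V:=\tau\bigl(\overline{\mathrm{span}}\{e_i\}\bigr)=\overline{\mathrm{span}}\{b_i\}$ is again a real subspace with orthonormal basis $\{b_i\}$; and because $\tau$ fixes $A$ and preserves both $\equiv_A$ and $\cong$ (and sends unit vectors realizing $p$ to unit vectors realizing $p$), $V$ is an indiscernible subspace in $p$ over $A$.

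The main obstacle is the converse, and specifically the uniqueness-of-Morley-sequence lemma, which I must establish for possibly unstable $A$-invariant types and arbitrary infinite $\kappa$. The inductive argument on finite sub-tuples needs $A$-invariance of $\mathfrak{p}$, to transport $\mathfrak{p}\upharpoonright A\bar{c}$ along an $A$-automorphism carrying one initial segment onto the other; the passage to infinite length then needs the observation that every increasing finite subsequence of a Morley sequence is again a Morley sequence, so that finite character of types applies. The other point that genuinely requires care is the verification that restrictions of $\mathfrak{p}$ are the unique wide extensions, since it is exactly this fact that turns the wideness computation of the first direction into the Morley condition.
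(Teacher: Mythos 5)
Your proposal is correct and follows essentially the same route as the paper: the same induction showing that wideness of $\mathrm{tp}(b_i/Ab_{<i})$ (witnessed by the unit sphere of the orthocomplement of $b_{<i}$ in $V$) forces it to equal the restriction of the unique invariant global wide extension, and the same appeal to uniqueness of Morley sequences in an invariant type, combined with Theorem \ref{thm:exist} and an automorphism transport, for the converse. The paper states the converse more tersely, but your fleshed-out version is exactly what it intends.
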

\begin{proof}
All we need to show is that an orthonormal basis of an indiscernible subspace in $p$ over $A$ is a Morley sequence in $p$. The converse will follow from the fact that all Morley sequences in a fixed invariant type of the same length have the same type along with the fact that minimal wide types have a unique global wide extension, which is therefore invariant.

Let $V$ be an indiscernible subspace in $p$ over $A$. Let $\{e_i\}_{i<\kappa}$ be an orthonormal basis of $V$. By construction, $\mathrm{tp}(e_0/A) = p$. Let $q$ be the global minimal wide extension of $p$. Assume that for some $j<\kappa$ we've shown for all $i<j$ that $\mathrm{tp}(e_i/Ae_{<i}) = q \upharpoonright Ae_{<i}$. Let $W = \overline{\mathrm{span}}(e_{\geq j})$. Since $V$ is an indiscernible subspace over $A$, for all unit norm $b,c\in W$, $b\equiv_{Ae_{<j}} c$, so in particular $\mathrm{tp}(b/Ae_{<j})$ is wide. Since $p$ is minimal{ }wide we must have $\mathrm{tp}(b/Ae_{<j}) = q\upharpoonright Ae_{<j}$. Therefore $\{e_i\}_{i<\kappa}$ is a Morley sequence.
\end{proof}

What is unclear at the moment is the answer to this question: 

\begin{quest}
If $p$ is a minimal wide type over the set $A$, is it stable in the sense of \cite[Def.\ 4.1]{SHELAH2019106738}? In other words, is every type $q$ extending $p$ over a model $\mathfrak{M}\supseteq A$  a definable type?
\end{quest}

\section{Counterexamples} \label{sec:count}
Here we collect some counterexamples that may be relevant to any model theoretic development of the ideas presented in this paper.

\subsection{No Infinitary Ramsey-Dvoretzky-Milman Phenomena in General}

Unfortunately some elements of the analogy between the  Ramsey-Dvoretzky-Milman Phenomenon and discrete Ramsey theory do not work. In particular, there is no extension of Dvoretzky's theorem, and therefore Fact \ref{fact:DM-thm}, to $k \geq \omega$, even for a fixed $\e>0$. 
Recall that a linear map $T:X\rightarrow Y$ between Banach spaces is an \emph{isomorphism} if it is a continuous bijection. This is enough to imply that $T$ is invertible and that both $T$ and $T^{-1}$ are Lipschitz. An analog of Dvoretzky's theorem for $k \geq \omega$ would imply that every sufficiently large Banach space has an infinite dimensional subspace isomorphic to Hilbert space, which is known to be false.  Here we will see as specific example of this.

 The following is a well known result in Banach space {theory} (for a proof see the comment after Proposition 2.a.2 in \cite{Lindenstrauss1996}).

\begin{fact} \label{fact:no-no}
For any distinct $X,Y \in \{\ell_p: 1\leq p < \infty\} \cup \{c_0\}$, no subspace of $X$ is isomorphic to $Y$.
\end{fact}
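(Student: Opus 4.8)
The plan is to exploit the rigidity of the unit vector bases of these spaces together with the Bessaga--Pełczyński selection principle. The key computational input is that in each space $X \in \{\ell_p : 1 \leq p < \infty\} \cup \{c_0\}$, every normalized block basic sequence of the unit vector basis $(e_n)$ is isometrically equivalent to $(e_n)$ itself: if $u_n = \sum_{i \in I_n} a_i e_i$ has disjoint supports $I_n$ and $\lVert u_n \rVert = 1$, then a one-line computation gives $\lVert \sum_n c_n u_n \rVert = \lVert \sum_n c_n e_n \rVert$, since the $\ell_p$ norm (resp.\ the $c_0$ norm) is determined by the $\ell_p$-sum (resp.\ the supremum) of the blockwise contributions. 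The second ingredient is the selection principle: every seminormalized weakly null sequence in a space with a Schauder basis has a subsequence that is a small perturbation of, hence equivalent to, a block basic sequence. Finally, the bases are distinguished by the growth of their fundamental function, $\lVert \sum_{i<n} e_i \rVert = n^{1/p}$ in $\ell_p$ and $\lVert \sum_{i<n} e_i \rVert = 1$ in $c_0$; since equivalence of bases preserves these quantities only up to multiplicative constants, comparability forces equality of the exponents and separates $c_0$ from every $\ell_p$.

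With these in hand, I would first dispose of the case where neither $X$ nor $Y$ equals $\ell_1$, so that both lie in $\{\ell_p : 1 < p < \infty\} \cup \{c_0\}$ and thus both have weakly null unit vector bases. Suppose toward a contradiction that $T : Y \to X$ is an isomorphism onto its image, and write $(f_n)$ for the unit vector basis of $Y$. Then $(Tf_n)$ is \emph{seminormalized} (because $T$ is an isomorphism onto its range) and \emph{weakly null} (because bounded operators are weak-to-weak continuous and $f_n \to 0$ weakly). By the selection principle I may pass to a subsequence $(Tf_{n_k})$ equivalent to a block basic sequence of $(e_n)$ in $X$, which by the rigidity computation is equivalent to the unit vector basis of $X$. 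As $T$ restricts to an isomorphism on the closed span, $(f_{n_k})$, i.e.\ the unit vector basis of $Y$, is equivalent to that of $X$; comparing fundamental functions forces the two bases to have the same growth, hence the same exponent $p$ (or both to be $c_0$), contradicting that $X$ and $Y$ are distinct members of the list.

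It remains to handle the cases where exactly one of $X,Y$ is $\ell_1$, and here the weakly null machinery breaks down, since the unit vector basis of $\ell_1$ is not weakly null; this is the main obstacle, and it is met with two standard facts about $\ell_1$. If $Y = \ell_1$ were to embed into some $X \in \{\ell_p : 1 < p < \infty\} \cup \{c_0\}$, I would use that each such $X$ has separable dual and that separability of the dual passes to closed subspaces (the dual of a subspace being a quotient of $X^\ast$, and quotients of separable spaces are separable); since $\ell_1^\ast = \ell_\infty$ is nonseparable, no such embedding exists. Conversely, if some $Y \in \{\ell_p : 1 < p < \infty\} \cup \{c_0\}$ were to embed into $X = \ell_1$, I would invoke the Schur property of $\ell_1$: the image under the embedding of the normalized weakly null unit vector basis of $Y$ is a seminormalized weakly null sequence in $\ell_1$, which the Schur property forces to be norm null, a contradiction. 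Each subargument is short once the relevant property of $\ell_1$ is granted, and together with the previous paragraph they exhaust all pairs of distinct spaces from the list; compare the discussion around Proposition~2.a.2 of \cite{Lindenstrauss1996}.
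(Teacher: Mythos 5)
Your argument is correct, and it is essentially the standard proof of this classical fact: the paper itself gives no proof, deferring to the remark following Proposition~2.a.2 of Lindenstrauss--Tzafriri, where exactly this combination of block-basis rigidity, the Bessaga--Pe\l czy\'nski selection principle, and separate treatment of $\ell_1$ (via the Schur property and nonseparability of $\ell_\infty$) appears. The only step you leave implicit is the passage from the subsequence $(f_{n_k})$ back to the full unit vector basis of $Y$, which is justified because the unit vector bases of $\ell_p$ and $c_0$ are $1$-subsymmetric, so every subsequence is isometrically equivalent to the whole basis.
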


Note that, whereas Corollary \ref{cor:ind-subsp} says that every Banach theory is consistent with the partial type of an indiscernible subspace, the following corollary says that this type can sometimes be omitted in arbitrarily large models (contrast this with the fact that the existence of an Erd\"os cardinal implies that you can find indiscernible sequences in any sufficiently large structure in a countable language \cite[Thm.\ 9.3]{Kanamori2003}).

\begin{cor} \label{cor:no-no-cor}
For $p \in [1,\infty) \setminus \{2\}$, there are arbitrarily large models of $\mathrm{Th}(\ell_p)$ that do not contain any infinite dimensional subspaces isomorphic to a Hilbert space. 
\end{cor}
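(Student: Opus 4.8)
The plan is to construct, for each infinite cardinal $\kappa$, a model of $\mathrm{Th}(\ell_p)$ of density character at least $\kappa$ whose only infinite-dimensional closed subspaces are, up to isomorphism, copies of $\ell_p$-like spaces that cannot contain a Hilbert space. The cleanest route is to exploit the fact that, in continuous logic, arbitrarily large models of a theory exist by upward L\"owenheim--Skolem, and to understand what the subspaces of such a model look like. Concretely, I would take a very saturated (or merely very large) elementary extension $\mathfrak{M} \succ \ell_p$ and argue that every infinite-dimensional closed subspace of $\mathfrak{M}$ contains a further infinite-dimensional subspace isomorphic to a subspace of $\ell_p$, and hence by Fact \ref{fact:no-no} cannot be isomorphic to Hilbert space (which is $\ell_2$).

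First I would pin down the induced structure on $\mathfrak{M}$: since $\mathrm{Th}(\ell_p)$ is axiomatized so that the predicates $s_{\bar a}$ record norms of linear combinations, any elementary extension $\mathfrak{M}$ is itself (the unit ball of) a Banach space, and the norm identities valid in $\ell_p$ persist. The key structural input is that $\ell_p$ for $p \neq 2$ has a characteristic norm inequality distinguishing it from Hilbert space --- for instance, Clarkson's inequalities, which are expressible as uniform statements about the predicates $s_{\bar a}$ and therefore hold in every model of $\mathrm{Th}(\ell_p)$. The cleaner approach, however, is to avoid reproving a Banach-space structure theorem inside $\mathfrak{M}$ and instead reduce to the classical fact cited as Fact \ref{fact:no-no}. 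To that end I would show that any infinite-dimensional subspace $Y \subseteq \mathfrak{M}$ contains a sequence $\{y_n\}_{n<\omega}$ of unit vectors that is ``block-like'': using saturation and the fact that $\ell_p$ (and hence $\mathfrak{M}$) admits, via its axioms, vectors whose supports can be made almost disjoint, one extracts a basic sequence equivalent to the standard $\ell_p$-basis, spanning a subspace isomorphic to $\ell_p$.

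The main obstacle I expect is precisely this extraction of an $\ell_p$-isomorphic subspace inside an arbitrary infinite-dimensional closed subspace of a nonseparable model $\mathfrak{M}$. In the separable setting this is a classical argument (any infinite-dimensional subspace of $\ell_p$ contains an isomorph of $\ell_p$, essentially by a gliding-hump/block-basis argument), but in the model-theoretic setting one must verify that the relevant block-basis construction can be carried out using only first-order consequences of $\mathrm{Th}(\ell_p)$, so that the resulting estimates transfer to $\mathfrak{M}$. The delicate point is that ``infinite-dimensional'' for a subspace $Y$ gives us, for each $n$, $n$ linearly independent vectors with quantitative separation; one then wants a uniform modulus allowing passage to an honest basic sequence in $Y$. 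I would handle this by a compactness/saturation argument: the statement that an $n$-dimensional configuration of unit vectors is within $(1+\e)$ of the standard $\ell_p$-basis of dimension $n$ is expressible, and the approximate finite-dimensional block structure of $\ell_p$ is a theorem schema of $\mathrm{Th}(\ell_p)$, so in a sufficiently saturated $\mathfrak{M}$ one realizes an infinite $\ell_p$-basic sequence inside $Y$.

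Finally I would assemble the conclusion: given any infinite-dimensional closed subspace $Y \subseteq \mathfrak{M}$, the previous step produces an infinite-dimensional $Z \subseteq Y$ with $Z \cong \ell_p$. If $Y$ were isomorphic to a Hilbert space $H$, then $Z$ would be isomorphic to an infinite-dimensional subspace of $H$, hence to $\ell_2$; but $Z \cong \ell_p$ with $p \neq 2$ contradicts Fact \ref{fact:no-no}. Since the models $\mathfrak{M}$ can be taken of arbitrarily large density character, this yields the arbitrarily large models asserted in the corollary. The load-bearing steps are the transfer of Banach-space block-basis arguments into the model-theoretic framework and the identification of a single classical non-embedding theorem (Fact \ref{fact:no-no}) to close the argument cleanly.
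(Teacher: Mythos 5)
There is a genuine gap, and it sits exactly where you flagged the ``main obstacle'': the claim that every infinite-dimensional closed subspace of a very large or very saturated elementary extension $\mathfrak{M}\succ\ell_p$ contains a copy of $\ell_p$ is false, and in fact sufficiently saturated models of $\mathrm{Th}(\ell_p)$ \emph{do} contain infinite-dimensional subspaces isomorphic to a Hilbert space. The reason is that $L_p[0,1]$ is finitely representable in $\ell_p$ (it is the closure of an increasing union of isometric copies of $\ell_p^{2^n}$ spanned by normalized indicators of dyadic intervals), so any ultrapower of $\ell_p$ over a countably incomplete ultrafilter --- and hence any sufficiently saturated model of $\mathrm{Th}(\ell_p)$ --- contains an isometric copy of $L_p[0,1]$; inside $L_p[0,1]$ the closed span of the Rademacher functions is isomorphic to $\ell_2$ by Khintchine's inequality. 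This is precisely why the corollary asserts only that \emph{some} arbitrarily large models omit Hilbertian subspaces, not that all of them do, so any proof that starts from an arbitrary large elementary extension is doomed. Relatedly, the gliding-hump fact you want to transfer (``every infinite-dimensional subspace of $\ell_p$ contains an isomorph of $\ell_p$'') is a theorem about the particular separable space $\ell_p$ and its distinguished basis; ``supports'' and ``almost disjointness'' are not part of the first-order structure, and the isomorphism types of subspaces are not preserved under elementary equivalence. Your final assembly step (via Fact \ref{fact:no-no}) is fine, but the extraction step cannot be carried out in an arbitrary model.

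The repair is to exhibit a concrete family of large models with the desired property rather than invoking L\"owenheim--Skolem or saturation. The paper takes $\mathfrak{M}=\ell_p(\kappa)$, verifies $\ell_p(\kappa)\equiv\ell_p$ by a chain argument producing a separable elementary substructure isomorphic to $\ell_p$, and then observes: if $V\subseteq\ell_p(\kappa)$ were an infinite-dimensional Hilbertian subspace, then any separable infinite-dimensional $V_0\subseteq V$ would be Hilbertian as well, and $V_0$ lies inside $\ell_p(A)$ for some countable $A\subseteq\kappa$, i.e., inside an honest copy of $\ell_p$ --- contradicting Fact \ref{fact:no-no}. Note that this argument only ever applies Fact \ref{fact:no-no} to genuine copies of $\ell_p$, never to an abstract model of its theory, which is exactly what makes it go through where the saturated-model approach fails.
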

\begin{proof}
Fix $p \in [1,\infty) \setminus \{2\}$ and $\kappa \geq \aleph_0$.
  Let $\ell_p(\kappa)$ be the Banach space of functions $f:\kappa \rightarrow \mathbb{R}$ such that $\sum_{i<\kappa} |f(i)|^p < \infty$. Note that $\ell_p(\kappa) \equiv \ell_p$.\footnote{To see this, we can find an elementary sub-structure of $\ell_p(\kappa)$ that is isomorphic to $\ell_p$: Let $\mathfrak{L}_0$ be a separable elementary sub-structure of $\ell_p(\kappa)$. For each $i<\omega$, given $\mathfrak{L}_i$, let $B_i$ be the set of all $f \in \ell_p(\kappa)$ that are the indicator function of a singleton $\{i\}$ for some $i$ in the support of some element of $\mathfrak{L}_i$. $B_i$ is countable. Let $\mathfrak{L}_{i+1}$ be a separable elementary sub-structure of $\ell_p(\kappa)$ containing $\mathfrak{L}_i\cup B_i$. $\overline{\bigcup_{i<\omega}\mathfrak{L}_{i+1}}$ is equal to the span of $\bigcup_{i<\omega} B_i$ and so is a separable elementary sub-structure of $\ell_p(\kappa)$ isomorphic to $\ell_p$.} 
 Pick a subspace $V \subseteq \ell_p(\kappa)$. If $V$ is isomorphic to a Hilbert space, then any separable $V_0 \subseteq V$ will also be isomorphic to a Hilbert space. There exists a countable set $A \subseteq \kappa$ such that $V_0 \subseteq \ell_p(A) \subseteq \ell_p(\kappa)$. By Fact \ref{fact:no-no}, $V_0$ is not isomorphic to a Hilbert space, which is a contradiction. Thus no such $V$ can exist.
\end{proof}



Even assuming we start with a Hilbert space we do not get an analog of the infinitary pigeonhole principle (i.e.\ a generalization of Fact \ref{fact:DM-thm}). The discussion by H\'ajeck and Mat\v ej in \cite[after Thm.\ 1]{Hajek2018} of a result of Maurey \cite{Maurey1995} implies that there is a Hilbert theory $T$ with a unary predicate $P$ such that for some $\e>0$ there are arbitrarily large models $\mathfrak{M}$ of $T$ such that for any infinite dimensional subspace $V \subseteq \mathfrak{M}$ there are unit vectors $a,b\in V$ with $|P^{\mathfrak{M}}(a)-P^{\mathfrak{M}}(b)| \geq \e$. 

Stability of a theory often has the effect of making Ramsey phenomena more prevalent in its models, so there is a natural question as to whether anything similar will happen here. Recall that a function $f:S(X)\rightarrow \mathbb{R}$ on the unit sphere $S(X)$ of a Banach space $X$ is \emph{oscillation stable} if for every infinite dimensional subspace $Y \subseteq X$ and every $\e>0$ there is an infinite dimensional subspace $Z \subseteq Y$ such that for any $a,b\in S(Z)$, $|f(a)-f(b)|\leq \e$.

\begin{quest} 
Does (model theoretic) stability imply oscillation stability? That is to say, if $T$ is a stable Banach theory, is every unary formula oscillation stable on models of $T$?
\end{quest}

\subsection{The (Type-)Definability of Indiscernible Subspaces and Complex Banach Structures} \label{subsec:comp}

A central question in the study of inseparably categorical Banach space theories is the degree of definability of the `minimal Hilbert space' that controls a given inseparable model of the theory. Results of Henson and Raynaud in \cite{HensonRaynaud} imply that in general the Hilbert space may not be definable.  In \cite{SHELAH2019106738}, Shelah and Usvyatsov ask whether or not the Hilbert space can be taken to be type-definable or a zeroset. In Example \ref{ex:no-def} we present a simple, but hopefully clarifying, example showing that this is slightly too much to ask.

It is somewhat uncomfortable that even in complex Hilbert structures we are only thinking about \emph{real} indiscernible subspaces rather than \emph{complex} indiscernible subspaces.
In particular, our existing Definition~\ref{defn:main-defn} is incompatible with complex structure:

\begin{prop} \label{prop:no-comp}
Let $T$ be a complex Banach theory. Let $V$ be an indiscernible subspace in some model of $T$. For any non-zero $a\in V$ and $\lambda \in \mathbb{C} \setminus \{0\}$, if $\lambda a \in V$, then $\lambda \in \mathbb{R}$.
\end{prop}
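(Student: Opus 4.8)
The plan is to argue by contradiction, exploiting the asymmetry between what the language sees and what congruence sees. The language of a complex Banach theory contains the norm predicates $s_{\bar c}(\bar x) = \lVert \sum_i c_i x_i \rVert$ with \emph{complex} coefficients $\bar c$, whereas the congruence relation of Definition \ref{defn:main-defn} — and hence, by indiscernibility of $V$, the $\equiv_A$-type of a tuple from $V$ — records only real metric data (pairwise distances, including distances to $\mathbf{0}$). The strategy is therefore to produce two tuples from $V$ that are congruent, so that they have the same type over $A$, but that are separated by one of these complex-coefficient formulas.

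First I would make two harmless normalizations. Scaling by the positive real $1/\lVert a\rVert$, I may assume $\lVert a \rVert = 1$; since $V$ is a real subspace, both $a/\lVert a\rVert$ and $(\lambda a)/\lVert a \rVert$ remain in $V$, so this is legitimate. Next, replacing $\lambda$ by $\lambda/|\lambda|$ — note $(\lambda/|\lambda|)a = |\lambda|^{-1}(\lambda a) \in V$ is again a real scalar multiple of an element of $V$, and $\lambda/|\lambda|$ is real if and only if $\lambda$ is — I may assume $|\lambda| = 1$. So suppose toward a contradiction that $|\lambda| = 1$ but $\lambda \notin \mathbb{R}$, i.e.\ $\mathrm{Im}\,\lambda \neq 0$.

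The key observation is that the pairs $(a, \lambda a)$ and $(\lambda a, a)$ are congruent: their mutual distance agrees, since $\lVert a - \lambda a\rVert = |1 - \lambda| = \lVert \lambda a - a\rVert$, and each coordinate has norm $1$ because $|\lambda| = 1$ (this is precisely where the reduction to the unit circle is used). By indiscernibility we get $(a, \lambda a) \equiv_A (\lambda a, a)$. Now I apply the complex-coefficient formula $\psi(x, y) = \lVert x - \overline{\lambda}\, y\rVert$. Using $\overline{\lambda}\lambda = |\lambda|^2 = 1$ one computes $\psi(a, \lambda a) = \lVert a - a\rVert = 0$, whereas $\psi(\lambda a, a) = \lVert(\lambda - \overline{\lambda})a\rVert = 2\,|\mathrm{Im}\,\lambda| > 0$. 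This contradicts $(a,\lambda a)\equiv_A(\lambda a, a)$, so $\lambda \in \mathbb{R}$.

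I do not expect a genuine obstacle here; the argument is short once the right congruent pair and the right complex coefficient are guessed. The only points requiring care are the bookkeeping of the two normalizations — in particular verifying that the rescaled $\lambda a$ still lies in $V$ after we divide $a$ by its norm and after we pass to $\lambda/|\lambda|$ — and confirming that $\psi$ is a bona fide formula of the theory, which it is, since the complex norm predicates $s_{\bar c}$ belong to the language. Conceptually, the content is simply that indiscernibility forces invariance under the metric symmetry that swaps $a$ with $\lambda a$, and this symmetry is incompatible with complex scalar multiplication unless $\lambda$ is real.
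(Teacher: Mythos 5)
Your proof is correct and is essentially the paper's argument: both exploit the fact that $(a,\lambda a)$ and $(\lambda a,a)$ are congruent (hence equivalent by indiscernibility) yet separated by a norm formula with a complex coefficient. The only cosmetic difference is that the paper first reduces to $\lambda=i$ using that $V$ is a real subspace (so $ia\in V$) and then uses the formula $d(ix,y)$, whereas you normalize to $|\lambda|=1$ and use $\lVert x-\overline{\lambda}y\rVert$ directly.
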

\begin{proof}
Assume that for some non-zero vector $a$, both $a$ and $ia$ are in $V$. We have that $(a,ia)\equiv(ia,a)$, but $(a,ia)\models d(ix,y)=0$ and $(ia,a)\not\models d(ix,y)=0$, which contradicts indiscernibility. Therefore we cannot have that both $a$ and $ia$ are in $V$. The same statement for $a$ and $\lambda a$ with $\lambda \in \mathbb{C}\setminus \mathbb{R}$ follows immediately, since $a,\lambda a \in V \Rightarrow ia \in V$.  
\end{proof}

We could define a notion of \emph{complex indiscernible subspaces} in which types are uniquely determined by (complex-valued) inner products, and that may be appropriate for complex Banach theories, as evidenced by the following. In the case of complex Hilbert space and other Hilbert spaces with a unitary Lie group action, Proposition~\ref{prop:no-comp} is the reason that indiscernible subspaces can fail to be type-definable. We will explicitly give the simplest example of this.



\begin{ex} \label{ex:no-def}
Let $T$ be the theory of an infinite dimensional complex Hilbert space and let $\mathfrak{C}$ be the monster model of $T$. $T$ is inseparably categorical, but for any partial type $\Sigma$ over any small set of parameters $A$, $\Sigma(\mathfrak{C})$ is not an infinite dimensional (real) indiscernible subspace (over $\varnothing$).

\end{ex}
\begin{proof}
$T$ is clearly inseparably categorical by the same reasoning that the theory of real infinite dimensional Hilbert spaces is inseparably categorical (being an infinite dimensional complex Hilbert space is first-order and there is a unique infinite dimensional complex Hilbert space of each infinite density character).  

If $\Sigma(\mathfrak{C})$ is not an infinite dimensional subspace of $\mathfrak{C}$, then we are done, so assume that $\Sigma(\mathfrak{C})$ is an infinite dimensional subspace of $\mathfrak{C}$. Let $\mathfrak{N}$ be a small model containing $A$. Since $\mathfrak{N}$ is a subspace of $\mathfrak{C}$, $\Sigma(\mathfrak{N}) = \Sigma(\mathfrak{C})\cap \mathfrak{N}$ is a subspace of $\mathfrak{N}$. Let $v \in \Sigma(\mathfrak{C})\setminus \Sigma(\mathfrak{N})$. This implies that $v\in \mathfrak{C} \setminus \mathfrak{N}$, so we can write $v$ as $v_\parallel+ v_\perp$, where $v_\parallel$ is the orthogonal projection of $v$ onto $\mathfrak{N}$ and $v_\perp$ is complex orthogonal to $\mathfrak{N}$. Necessarily we have that $v_\perp \neq 0$. Let $\mathfrak{N}^\perp$ be the orthocomplement of $\mathfrak{N}$ in $\mathfrak{C}$. If we write elements of $\mathfrak{C}$ as $(x,y)$ with $x\in \mathfrak{N}$ and $y\in \mathfrak{N}^\perp$, then the maps  $(x,y)\mapsto (x,-y)$, $(x,y)\mapsto (x,iy)$, and $(x,y)\mapsto(x,-iy)$ are automorphisms of  $\mathfrak{C}$ fixing $\mathfrak{N}$. Therefore $(v_\parallel + v_\perp) \equiv_{\mathfrak{N}} (v_\parallel - v_\perp) \equiv_{\mathfrak{N}} (v_\parallel + iv_\perp) \equiv_{\mathfrak{N}} (v_\parallel -iv_\perp)$, so we must have that $(v_\parallel - v_\perp),(v_\parallel + iv_\perp),( v_\parallel - iv_\perp) \in \Sigma(\mathfrak{C})$ as well. Since $\Sigma(\mathfrak{C})$ is a subspace, we have that $b_\perp \in \Sigma(\mathfrak{C})$ and $ib_\perp \in \Sigma(\mathfrak{C})$. Thus by Proposition \ref{prop:no-comp} $\Sigma(\mathfrak{C})$ is not an indiscernible subspace over $\varnothing$.
\end{proof}
This example is a special case of this more general construction: If $G$ is a compact Lie group with an irreducible  unitary representation on $\mathbb{R}^n$ for some $n$ (i.e.\ the group action is transitive on the unit sphere), then we can extend this action to $\ell_2$ by taking the Hilbert space direct sum of countably many copies of the irreducible unitary representation of $G$, and we can think of this as a structure by adding function symbols for the elements of $G$. 
The theory of this structure will be totally categorical and satisfy the conclusion of Example \ref{ex:no-def}. 

Example \ref{ex:no-def} is analogous to the fact that in many strongly minimal theories the set of generic elements in a model is not itself a basis/Morley sequence. The immediate response would be to ask the question of whether or not the unit sphere of the complex linear span (or more generally the `$G$-linear span,' i.e.\ the linear span of $G\cdot V$) of the indiscernible subspace in a minimal{ }wide type agrees with the set of realizations of that minimal{ }wide type, but this can overshoot:

\begin{ex} \label{ex:bad-comp}
Consider the structure whose universe is (the unit ball of) $\ell_2 \oplus \ell_2$ (where we are taking $\ell_2$ as a real Hilbert space), with a complex action $(x,y)\mapsto (-y,x)$ and orthogonal projections $P_0$ and $P_1$ for the sets $\ell_2 \oplus \{\mathbf{0}\}$ and $\{\mathbf{0}\} \oplus \ell_2$, respectively. Let $T$ be the theory of this structure. This is a totally categorical complex Hilbert structure, but for any complete type $p$ and $\mathfrak{M}\models T $, $p(\mathfrak{M})$ does not contain the unit sphere of a non-trivial complex subspace.
\end{ex}
\begin{proof}
$T$ is bi-interpretable with a real Hilbert space, so it is totally categorical. For any complete type $p$, there are unique values of $\left\lVert P_0(x) \right\rVert$ and $\left\lVert P_1(x) \right\rVert$ that are consistent with $p$, so the set of realizations of $p$ in any model cannot contain $\{\lambda a\}_{\lambda \in \mathrm{U}(1)}$ for $a$, a unit vector, and $\mathrm{U}(1) \subset \mathbb{C}$, the set of unit complex numbers. 
\end{proof}
The issue, of course, being that, while we declared by fiat that this is a complex Hilbert structure, the expanded structure does not respect the complex structure.

So, on the one hand, Example \ref{ex:bad-comp} shows that in general the unit sphere of the complex span won't be contained in the minimal{ }wide type. On the other hand, a priori the set of realizations of the minimal{ }wide type could contain more than just the unit sphere of the complex span, such as if we have an $\mathrm{SU}(n)$ action. The complex (or $G$-linear) span of a set is of course part of the algebraic closure of the set in question, so this suggests a small refinement of the original question of Shelah and Usvyatsov:

\begin{quest}
If $T$ is an inseparably categorical Banach {theory}, $p$ is a  minimal{ }wide type, and $\mathfrak{M}$ is a model of $T$ which is prime over an indiscernible subspace $V$ in $p$, does it follow that $p(\mathfrak{M})$ is the unit sphere of a subspace contained in the algebraic closure of $V$? 
\end{quest}

This would be analogous to the statement that if $p$ is a strongly minimal type in an uncountably categorical discrete theory and $\mathfrak{M}$ is a model prime over a Morley sequence $I$ in $p$, then $p(\mathfrak{M})\subseteq \mathrm{acl}(I)$.

\subsection{Non-Minimal Wide Types}

The following example shows, unsurprisingly, that Theorem \ref{thm:main} does not hold for non-minimal{ }wide types. 

\begin{ex}
Let $T$ be the theory of (the unit ball of) the infinite Hilbert space sum $\ell_2 \oplus \ell_2 \oplus \dots$, where we add a predicate $D$ that is the distance to $S^\infty \sqcup S^\infty \sqcup \dots$, where $S^\infty$ is the unit sphere of the corresponding copy of $\ell_2$. This theory is $\omega$-stable. The partial type $\{D = 0\}$ has a unique global non-forking extension $p$ that is wide, but the unit sphere of the linear span of any Morley sequence in $p$ is not contained in $p(\mathfrak{C})$.
\end{ex}
\begin{proof}
This follows from the fact that on $D$ the equivalence relation `$x$ and $y$ are contained in a common unit sphere' is definable by a formula, namely \[E(x,y) = \inf_{z,w \in D}(d(x,z)\dotdiv 1) + (d(z,w)\dotdiv 1) + (d(w,y)\dotdiv 1),\]
where $a \dotdiv b = \max\{a-b,0\}$. If $x,y$ are in the same sphere, then let $S$ be a great circle passing through $x$ and $y$ and choose $z$ and $w$ evenly spaced along the shorter path of $S$. It will always hold that $d(x,z),d(z,w),d(w,y) \leq 1$, so we will have $E(x,y)=0$. On the other hand, if $x$ and $y$ are in different spheres, then $E(x,y)= \sqrt{2} -1$.

Therefore a Morley sequence in $p$ is just any sequence of elements of $D$ which are pairwise non-$E$-equivalent and the unit sphere of the span of any such set is clearly not contained in $D$.
\end{proof}



\bibliographystyle{abbrv}
\bibliography{ref}

%


\end{document}